\title{Counting functions over periodic orbits of a skew-product map}
\author{Subith Gopinathan\footnote{Indian Institute of Science Education and Research Thiruvananthapuram (IISER-TVM). \\ email: \texttt{subith21@iisertvm.ac.in}}, \\ Bharath Krishna Seshadri\footnote{Indian Institute of Science Education and Research Thiruvananthapuram (IISER-TVM). \\ email: \texttt{bharathmaths21@iisertvm.ac.in}} and \\ Shrihari Sridharan\footnote{Corresponding Author. Indian Institute of Science Education and Research Thiruvananthapuram (IISER-TVM). email: \texttt{shrihari@iisertvm.ac.in}}}
\DeclareFontFamily{OT1}{pzc}{}
\DeclareFontShape{OT1}{pzc}{m}{it}%
              {<-> s * [0.900] pzcmi7t}{}
\DeclareMathAlphabet{\mathpzc}{OT1}{pzc}%
                                 {m}{it}
\newtheorem{theorem}{Theorem}[section]}
\newtheorem{definition}[theorem]{Definition}}
\newtheorem{proposition}[theorem]{Proposition}}
\newtheorem{corollary}[theorem]{Corollary}} 
\numberwithin{equation}{section}
\newenvironment{proof}{\paragraph{Proof:}}{\hfill$\bullet$}
\begin{document}

\maketitle

\begin{abstract} 
\noindent 
In this manuscript, we investigate some properties of certain counting functions, associated to the ergodic sums computed along the periodic orbits of the skew-product map, related to a finitely generated rational semigroup. To be precise, we obtain some comparability results for the above mentioned counting functions. 
\end{abstract}

\begin{tabular}{l l} 
{\bf Keywords} & Skew-product map \\ 
& Orbit counting functions \\ 
& Comparable sequences \\ 
& \\ 
{\bf MSC Subject} & 37A44, 37F10, 37F20, 11N45. \\ 
{\bf Classifications} & \\ 
\end{tabular} 
\bigskip 

\newpage 

\section{Introduction} 

The theory of dynamical systems is concerned with the study of iterations of a given map $T$ defined from a set $X$ to itself. One may ascribe various structures to $X$, say topological, measurable, analytic {\it etc.}, and investigate interesting properties of the iterates of $T$ in each context, with $T$ possessing appropriate properties that preserves the given structure. For a given map $T$, the points which cycle back to themselves after a finite sequence of iterations of $T$, play a key role in the dynamics of $T$. These are called the periodic points of $T$ and a closely related notion is a closed orbit of finite length. 

There are typically two ways to study periodic points. One is to use a type of generating function called the dynamical zeta function, the study of which has its origins in the work of Artin and Mazur in \cite{am:1965} and gained prominence due to the works of Ruelle, Parry, Pollicott, Sharp, Bowen and Lanford, among others, see for example \cite{wp:1983, pp:1983, pp:1990, rs:1991, dr:2002}. This study has also been inspired by the Riemann zeta function from number theory. Two of the classic references for a survey of results on this topic are \cite{pp:1990, dr:2002}.

The second approach is to define various counting functions arising as a result of studying the number of periodic points for a given period and finding various asymptotes to detect the order of growth of these counting functions. The prime number theorem is one of the important results in number theory and it gives an asymptote for the number of primes less than or equal to a given $x \in \mathbb{R}^{+}$. The theorem, proved independently by Hadamard and de la Vallée Poussin, says that the prime counting function $\pi(x)= \displaystyle \sum_{p \leq x} 1$ (where $p$ is a prime) is asymptotic to the function $\dfrac{x}{\log x}$. To be precise, it says $\displaystyle \lim_{x \to \infty} \dfrac{\pi(x)}{\frac{x}{\log x}} = 1$. Our prime focus in this work is to define such counting functions for a randomly generated dynamical system, from a finite set of maps, that we shall soon define. This approach has its origins in the works of Parry and Pollicott, see for example \cite{wp:1983, pp:1983}, although these works contain some other vast generalisations of the above concepts as well. 

The techniques we use in this paper have been studied in \cite{emsw:2007} with the focus being on non-hyperbolic dynamical systems and later in the works \cite{ana:2015, ana:2017}, which focus on the Dyck shift and the Motzkin shift, respectively. A survey of these works can be found in \cite{nnd:2019}. In this paper, we work with the skew product map related to a given rational semigroup, the definitions of which shall be introduced in Section \eqref{sec:prelim}. The counting functions we have introduced here are different from the ones defined in \cite{ana:2015, ana:2017}, as it shall be clear to the reader from the definitions given in Section \eqref{sec:prelim}. We have thus generalised the results in \cite{ana:2015, ana:2017} for the skew product map using our new counting functions, which also include some information about the ergodic sum. We also give a few details about the number theoretic and dynamical analogues of our theorems in various contexts, in between the proofs of our results. 

The skew product map that we deal with in this work was introduced by Sumi in \cite{sm1:2000} in order to study a rational semigroup. Over time, it has become an interesting dynamical system in its own right and has been a focal point of various results with regards to topological and measure theoretic aspects - see for example, \cite{sm2:2001, su:2009, stt:2021}. 

%This paper is organised as follows. In the upcoming section, we give some preliminaries related to the concepts involved in this work and also state our main results. We prove our first theorem, the analogue of the asymptotic version of the prime orbit theorem in our context, in Section \eqref{sec:pot}. In Section \eqref{sec:mmd}, we prove the analogue of the asymptotic version of Mertens' orbit theorem. We conclude the paper by stating a few corollaries to our results, in Section \eqref{sec:cor}.

\section{Preliminaries and the main results} 
\label{sec:prelim}

Consider finitely many rational maps, say $R_{1}, R_{2}, \cdots, R_{M}$ acting on the Riemann sphere, $\widehat{\mathbb{C}} = \mathbb{C} \cup \{ \infty\}$, having degrees $r_{1}, r_{2}, \cdots, r_{M}$, respectively. We assume that there exists at least one rational map in the considered collection whose degree is at least $2$. For such a collection, we define a skew-product map $S : X = \Sigma_{M}^{+} \times \widehat{\mathbb{C}} \righttoleftarrow$ defined by $S \left( \omega, z_{0} \right) = \left( \sigma \omega, R_{\omega_{1}} z_{0} \right)$ where $\omega = \left( \omega_{1} \omega_{2} \cdots \right)$ is some infinitely long word with letters drawn from $\left\{ 1, 2, \cdots, M \right\},\ \sigma : \Sigma_{M}^{+} \righttoleftarrow$ is the shift map defined by $\left( \sigma \omega \right)_{j} = \omega_{j + 1}$ for all $j \ge 1$. By virtue of this definition, the $n^{{\rm th}}$ iterate of $S$ is given by $S^{n} \left( \omega, z_{0} \right) = \left( \sigma^{n} \omega, \left( R_{\omega_{n}} \circ \cdots \circ R_{\omega_{1}} \right) z_{0} \right)$. We accord the product topology on the space $X$, borne out of the product metric between $d_{\Sigma_{M}^{+}}$ on $\Sigma_{M}^{+}$ and the spherical metric $d_{\widehat{\mathbb{C}}}$ on the Riemann sphere, where 
\[ d_{\Sigma_{M}^{+}} \left( \omega, \eta \right)\ =\ \begin{cases} \dfrac{1}{2^{n(\omega, \eta)}} & \text{for}\ \ \omega \ne \eta\ \ \text{where}\ \ n(\omega, \eta) = \min \left\{ n \in \mathbb{Z}_{+} : \omega_{n} \ne \eta_{n} \right\} \vspace{+5pt} \\ 
0 & \text{if}\ \ \ \omega = \eta. \end{cases} \]

\begin{definition} 
A point $(\omega, z_{0}) \in X$ is called a periodic point of $S$ of period $n \in \mathbb{Z}_{+}$ if $S^{n} (\omega, z_{0}) = (\omega, z_{0})$; meaning $\omega \in \Sigma_{M}^{+}$ is some $n$-lettered word concatenated to itself infinitely many times and $\left( R_{\omega_{n}} \circ \cdots \circ R_{\omega_{1}} \right) z_{0} = z_{0}$. Further, $n$ is said to be the prime period for the point $(\omega, z_{0})$ if $n$ is the least positive integer for which $S^{n} (\omega, z_{0}) = (\omega, z_{0})$. 
\end{definition} 

By ${\rm Per}_{n} (S)$ and ${\rm Per}'_{n} (S)$, we denote the set of all periodic points of period $n$ for $S$ and the set of all periodic points of prime period $n$ for $S$, respectively. Let $\mathscr{O}_{n} (S)$ denote the collection of all closed orbits of $S$ of length $n$ in $X$, where by a closed orbit of length $n$, we mean the set $\left\{ (\omega, z_{0}), S (\omega, z_{0}), \cdots, S^{n - 1} (\omega, z_{0}) \right\}$ that satisfies $S^{n} (\omega, z_{0}) = (\omega, z_{0})$. 

We consider the set of all functions $f : X \longrightarrow \mathbb{R}$ and denote the $n^{{\rm th}}$ order ergodic sum of $f$ as $f^{n}$ that is defined by 
\[ f^{n} (\omega, z_{0})\ \ =\ \ f (\omega, z_{0}) + f \left( S (\omega, z_{0}) \right) + \cdots + f \left( S^{n - 1} (\omega, z_{0}) \right). \] 
Thus, for any function $f$, its $n^{{\rm th}}$ order ergodic sum along any closed orbit of length $n$, say $\tau = \left\{ (\omega, z_{0}), S (\omega, z_{0}), \cdots, S^{n - 1} (\omega, z_{0}) \right\} \in \mathscr{O}_{n} (S)$, is denoted by $f^{n} (\tau)$ and is defined by 
\[ f^{n} (\tau)\ \ =\ \ f (\omega, z_{0}) + f \left( S (\omega, z_{0}) \right) + \cdots + f \left( S^{n - 1} (\omega, z_{0}) \right). \] 

Dynamicists are interested in a counting function, analogous to $\pi (x)$, as stated in the introduction, called the orbit counting function. In \cite{wp:1983}, Parry proved the prime orbit theorem for closed orbits of shifts of finite type and for suspension flows related to shifts. Further, in \cite{pp:1983}, Parry and Pollicott proved the prime orbit theorem for closed orbits of Axiom A flows. These works use the Weiner-Ikehara theorem that is used to prove the classical prime number theorem in number theory. More details on this can be found in \cite{pp:1990}.

We now define a few quantities, as follows: 
\begin{eqnarray*} 
E_{S} (f, n) & = & \sum_{p\, =\, (\omega, z_{0})\, \in\, {\rm Per}_{n} (S)} e^{f^{n_{p}} (\omega, z_{0})}\ \ \text{where}\ \ \ n_{p}\ \text{is the prime period of}\ (\omega, z_{0}) \\ 
D_{S} (f, n) & = & \sum_{(\omega, z_{0})\, \in\, {\rm Per}'_{n} (S)} e^{f^{n} (\omega, z_{0})} 
\end{eqnarray*} 
\begin{eqnarray*} 
C_{S} (f, n) & = & \sum_{\tau\, \in\, \mathscr{O}_{n} (S)} e^{f^{n} (\tau)}. 
\end{eqnarray*} 

Using the above quantities, we define the following counting function, called the \emph{prime orbit counting function}, given by 
\[ \pi_{S} (f, N)\ \ =\ \ \sum_{n\, \le\, N} C_{S} (f, n). \] 

We are now ready to state the main theorems of this paper, after we introduce the following notation. We say that a sequence $A_{n}$ is comparable to a sequence $B_{n}$, denoted as $A_{n} \asymp B_{n}$ if there exists positive constants $\kappa_{1}$ and $\kappa_{2}$ such that $\kappa_{1} B_{n} \le A_{n} \le \kappa_{2} B_{n}$ for all $n \in \mathbb{Z}_{+}$, meaning that the two sequences have the same order of growth. %We also say that a sequence $A_{n}$ is asymptotic to a sequence $B_{n}$, denoted as $A_{n} \sim B_{n}$ if $\lim\limits_{n\, \to\, \infty} \dfrac{A_{n}}{B_{n}} = 1$. 

\begin{theorem} 
\label{thm:one}
Consider the skew-product map $S$ defined on $X$. Let $f : X \longrightarrow \mathbb{R}$. Suppose there exist a constant $\lambda_{f} > 1$ such that $E_{S} (f, n) \asymp \left( \lambda_{f} \right)^{n}$. Then, for $N \in \mathbb{Z}_{+}$, we have 
\[ \pi_{S} (f, N)\ \ =\ \ \sum_{n\, \le\, N} C_{S} (f, n)\ \ \asymp\ \ \frac{\left( \lambda_{f} \right)^{N}}{N}. \]
\end{theorem}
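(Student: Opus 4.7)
The approach is to exploit the classical multiplicative structure linking $E_{S}$, $D_{S}$ and $C_{S}$, apply Möbius inversion to recover $C_{S}$ from $E_{S}$, and then sum over $n \le N$, using $\lambda_{f} > 1$ to make everything concentrate near the top of the range.

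First I would record two elementary identities. Each closed orbit $\tau \in \mathscr{O}_{n}(S)$ consists of exactly $n$ distinct points of prime period $n$, and $f^{n}$ is constant along $\tau$; conversely every point in ${\rm Per}'_{n}(S)$ lies in a unique such orbit. Hence $D_{S}(f, n) = n\, C_{S}(f, n)$. The disjoint decomposition ${\rm Per}_{n}(S) = \bigsqcup_{d \mid n} {\rm Per}'_{d}(S)$, combined with the fact that for $p \in {\rm Per}'_{d}(S)$ the exponent $f^{n_{p}}(p)$ is just $f^{d}(p)$, yields
\[E_{S}(f, n)\ =\ \sum_{d\, \mid\, n} D_{S}(f, d).\]
Möbius inversion then gives $n\, C_{S}(f, n) = \sum_{d \mid n} \mu(n/d)\, E_{S}(f, d)$.

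For the upper bound, the crude estimate $n\, C_{S}(f, n) = D_{S}(f, n) \le E_{S}(f, n) \le \kappa_{2}\, \lambda_{f}^{n}$ is immediate, so $C_{S}(f, n) \le \kappa_{2}\, \lambda_{f}^{n}/n$. Splitting $\pi_{S}(f, N)$ at $n = \lceil N/2 \rceil$, the low portion is bounded by a geometric sum of size $O(\lambda_{f}^{N/2})$, which is $o(\lambda_{f}^{N}/N)$, while the high portion is at most $(2/N)\sum_{n=1}^{N} \lambda_{f}^{n} = O(\lambda_{f}^{N}/N)$. For the lower bound, I would peel off the $d = n$ term in the Möbius expression and use $|\mu| \le 1$, the fact that proper divisors of $n$ are at most $n/2$, and the trivial count of at most $n$ divisors, to obtain
\[n\, C_{S}(f, n)\ \ge\ E_{S}(f, n) - \sum_{\substack{d\, \mid\, n \\ d\, <\, n}} E_{S}(f, d)\ \ge\ \kappa_{1}\, \lambda_{f}^{n} - \kappa_{2}\, n\, \lambda_{f}^{n/2}.\]
Since $\lambda_{f} > 1$, the error term is dominated by $(\kappa_{1}/2)\, \lambda_{f}^{n}$ once $n \ge N_{0}$, giving $C_{S}(f, n) \ge (\kappa_{1}/2)\, \lambda_{f}^{n}/n$ for all large $n$. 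Consequently $\pi_{S}(f, N) \ge C_{S}(f, N) \ge (\kappa_{1}/2)\, \lambda_{f}^{N}/N$ for $N \ge N_{0}$, and the finitely many small $N$ are absorbed into the multiplicative constant via the trivial lower bound $\pi_{S}(f, N) \ge C_{S}(f, 1) > 0$, which is positive because $E_{S}(f, 1) \ge \kappa_{1}\, \lambda_{f} > 0$ forces ${\rm Per}_{1}(S) \ne \emptyset$.

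The main obstacle is the lower bound step. Möbius inversion produces an alternating sum in which individual terms can in principle be as large as $E_{S}(f, n)$ itself, so it yields something useful only because the hypothesis $\lambda_{f} > 1$ creates an \emph{exponential} gap between $\lambda_{f}^{n}$ and the worst-case contribution $n\, \lambda_{f}^{n/2}$ coming from proper divisors; without this gap the error could swallow the main term. Everything else — the upper bound, the transfer from the pointwise estimate on $C_{S}(f, n)$ to the cumulative estimate on $\pi_{S}(f, N)$, and the handling of small $N$ — is a routine concentration-near-the-top computation for geometric sums weighted by $1/n$.
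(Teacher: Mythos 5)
Your proof is correct, and it splits naturally into a half that mirrors the paper and a half that genuinely departs from it. The lower bound is essentially the paper's Proposition \ref{prop:pisfnlb}: both arguments invert the relation $nC_{S}(f,n)=\sum_{d\mid n}\mu(n/d)E_{S}(f,d)$, isolate the $d=n$ term, and crush the proper-divisor contribution using the exponential gap between $\lambda_{f}^{n}$ and $\lambda_{f}^{n/2}$ --- the paper bounds that contribution by $c_{1}\kappa_{2}\lambda_{f}^{n/2}$ via its partial-sum estimate \eqref{ineq:ubesfn}, you by the cruder but equally sufficient $\kappa_{2}\,n\,\lambda_{f}^{n/2}$, and both then keep only the $n=N$ term of $\pi_{S}(f,N)$. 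Where you diverge is the upper bound. The paper's Proposition \ref{prop:pisfnub} again passes through the M\"obius sum with $|\mu|\le 1$ and controls $\pi_{S}(f,N)$ by a fairly elaborate splitting at $N-\bigl[N^{1/6}\bigr]$ combined with \eqref{ineq:ubesfn} and the estimate \eqref{ineq:tsfnub}. You avoid the inversion entirely by observing that every summand defining $E_{S}(f,n)$ is a positive exponential, so that $nC_{S}(f,n)=D_{S}(f,n)\le E_{S}(f,n)\le\kappa_{2}\lambda_{f}^{n}$ pointwise, after which a single split of $\sum_{n\le N}\lambda_{f}^{n}/n$ at $N/2$ finishes; this is shorter, dispenses with the $N^{1/6}$ bookkeeping, and has the side benefit of making explicit the identities $D_{S}(f,n)=nC_{S}(f,n)$ and $E_{S}(f,n)=\sum_{d\mid n}D_{S}(f,d)$ (using that the ergodic sum $f^{n}$ is constant on a periodic orbit) which the paper only asserts implicitly through \eqref{eqn:mobinv}. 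Your explicit handling of the finitely many $N\le N_{0}$, via $\pi_{S}(f,N)\ge C_{S}(f,1)=E_{S}(f,1)\ge\kappa_{1}\lambda_{f}>0$, also patches a small point that the final display of the paper's Proposition \ref{prop:pisfnlb} glosses over, since that display only yields the claimed bound once $N>N_{0}$.
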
 

\begin{theorem} 
\label{thm:two}
Consider the skew-product map $S$ defined on $X$. Let $f : X \longrightarrow \mathbb{R}$ satisfy the hypothesis, as stated in Theorem \ref{thm:one}. Then, for $N \in \mathbb{Z}_{+}$, we have  
\begin{equation} 
\sum_{n\, \le\, N}  \frac{C_{S} (f, n)}{\lambda_{f}^{n}}\ \ \asymp\ \ \log N. 
\end{equation} 
\end{theorem}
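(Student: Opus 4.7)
The plan is to reduce Theorem \ref{thm:two} to the sharp per-term estimate $C_{S}(f, n) \asymp \lambda_{f}^{n}/n$, after which the conclusion follows immediately from the harmonic asymptotic $\sum_{n \le N} 1/n \asymp \log N$. Two elementary identities relate the three counting quantities $E_{S}$, $D_{S}$ and $C_{S}$. First, since the ergodic sum $f^{n}$ is constant along a closed orbit of length $n$ and every orbit in $\mathscr{O}_{n}(S)$ contributes exactly $n$ points to ${\rm Per}'_{n}(S)$, one has $D_{S}(f, n) = n\, C_{S}(f, n)$. Second, every point of (possibly non-prime) period $n$ has prime period $d$ dividing $n$, and the exponent $f^{n_{p}}$ appearing in the definition of $E_{S}$ coincides with $f^{d}$ on such a point; partitioning ${\rm Per}_{n}(S)$ by prime period therefore yields
\begin{equation*}
E_{S}(f, n) \;=\; \sum_{d \mid n} D_{S}(f, d) \;=\; \sum_{d \mid n} d\, C_{S}(f, d).
\end{equation*}

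With these identities in hand, the upper bound on $C_{S}(f, n)$ is immediate: isolating the summand $d = n$ gives $n\, C_{S}(f, n) \le E_{S}(f, n) \le \kappa_{2} \lambda_{f}^{n}$ by the hypothesis. For the lower bound I would rearrange to
\begin{equation*}
n\, C_{S}(f, n) \;=\; E_{S}(f, n) \;-\; \sum_{\substack{d \mid n \\ d < n}} d\, C_{S}(f, d),
\end{equation*}
and feed the just-obtained upper estimate into the subtracted sum. Since every proper divisor of $n$ satisfies $d \le n/2$, that sum is bounded by $\sum_{d \le n/2} \kappa_{2} \lambda_{f}^{d} = O\!\left( \lambda_{f}^{n/2} \right)$, which is exponentially dominated by $\kappa_{1} \lambda_{f}^{n} \le E_{S}(f, n)$. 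Consequently $n\, C_{S}(f, n) \ge (\kappa_{1}/2) \lambda_{f}^{n}$ for all $n$ exceeding a fixed threshold, and the finitely many smaller values of $n$ are absorbed into the comparability constants.

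Combining both directions gives $C_{S}(f, n) \asymp \lambda_{f}^{n}/n$, so $\sum_{n \le N} C_{S}(f, n)/\lambda_{f}^{n} \asymp \sum_{n \le N} 1/n \asymp \log N$, which is the claimed asymptotic. The main subtlety I anticipate is ensuring that the loss of an exponential factor in the proper-divisor tail genuinely overcomes the (subpolynomial) divisor-count factor uniformly in $n$, and that the constants match across the small-$n$ and large-$n$ regimes; this is precisely the technical step that would underlie the proof of Theorem \ref{thm:one}, so I expect the per-term estimate to already be extracted there and Theorem \ref{thm:two} to follow as a direct corollary.
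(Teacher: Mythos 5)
Your proposal is correct and follows essentially the same route as the paper: both rest on the divisor identity $E_{S}(f,n)=\sum_{d\,:\,d|n} d\, C_{S}(f,d)$ (equivalently the M\"{o}bius inversion in Equation \eqref{eqn:mobinv}), show that the $d=n$ term dominates so that $n\,C_{S}(f,n)\asymp \lambda_{f}^{n}$ for all large $n$, and then conclude via the harmonic estimate $\sum_{n\le N} 1/n \asymp \log N$. Your upper bound $n\,C_{S}(f,n)\le E_{S}(f,n)\le \kappa_{2}\lambda_{f}^{n}$, read off directly from the positivity of the summands, slightly streamlines the paper's two-part split of the M\"{o}bius sum into $d<n$ and $d=n$ contributions, but the substance is identical.
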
 

\begin{theorem} 
\label{thm:three}
Consider the skew-product map $S$ defined on $X$. Let $f : X \longrightarrow \mathbb{R}$ satisfy the hypothesis, as stated in Theorem \ref{thm:one}. Then, for any fixed $k > 0$, we have 
\begin{equation} 
\sum_{n\, \ge\, 1}  \frac{C_{S} (f, n)}{n^{k} \lambda_{f}^{n}}\ \ \asymp\ \ \dfrac{1}{k}. 
\end{equation} 
\end{theorem}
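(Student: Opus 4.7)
The natural strategy is to deduce Theorem \ref{thm:three} from Theorem \ref{thm:two} via Abel's summation formula. Setting $a_n := C_S(f,n)/\lambda_f^n$, Theorem \ref{thm:two} says that the partial sums $A_N := \sum_{n \le N} a_n$ satisfy $A_N \asymp \log N$ for $N \ge 2$. Applying partial summation to $\sum_{n \le N} a_n \cdot n^{-k}$ yields
\[
\sum_{n=1}^{N} \frac{a_n}{n^k} \;=\; \frac{A_N}{N^k} \;+\; \sum_{n=1}^{N-1} A_n \left(\frac{1}{n^k} - \frac{1}{(n+1)^k}\right),
\]
and since $A_N/N^k = O(\log N / N^k) \to 0$ for every $k > 0$, the boundary term vanishes in the limit. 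This simultaneously establishes convergence of the series and produces the identity
\[
\sum_{n \ge 1} \frac{C_S(f,n)}{n^k \lambda_f^n} \;=\; \sum_{n \ge 1} A_n \left(\frac{1}{n^k} - \frac{1}{(n+1)^k}\right).
\]

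For the right-hand side, the mean value theorem gives $\tfrac{1}{n^k} - \tfrac{1}{(n+1)^k} = k \int_n^{n+1} x^{-(k+1)}\, dx$, which is comparable to $k/n^{k+1}$. Inserting $A_n \asymp \log n$ (and absorbing the small-$n$ terms, where $\log n$ degenerates, into a bounded error), the problem reduces to showing that $k \sum_{n \ge 2} (\log n)/n^{k+1} \asymp 1/k$. Since $x \mapsto (\log x)/x^{k+1}$ is eventually monotone, one may compare this series to the integral $\int_1^\infty (\log x)/x^{k+1}\, dx$, which integration by parts (with $u = \log x$, $dv = x^{-(k+1)} dx$) evaluates exactly to $1/k^2$. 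Multiplying by $k$ gives the predicted $1/k$ growth.

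The main obstacle will be tracking the $k$-dependence of the various comparability constants: both the estimate $\tfrac{1}{n^k} - \tfrac{1}{(n+1)^k} \asymp k/n^{k+1}$ and the integral comparison involve ratios of the form $((n+1)/n)^{k+1}$, and the replacement of $A_n$ by $\log n$ on an initial range contributes an error that needs to be shown subleading. These $k$-dependent factors must be carefully absorbed into the final constants $\kappa_1, \kappa_2$. This partial summation route is forced because Theorem \ref{thm:two} only controls $A_N$, not the individual $a_n$—so a direct term-by-term comparison of $C_S(f,n)/(n^k \lambda_f^n)$ with $1/n^{k+1}$ is not available, and one genuinely needs the Abel-type passage from partial-sum information to weighted-sum information.
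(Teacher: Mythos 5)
Your proposal is essentially correct, but it takes a genuinely different route from the paper's. The paper does not pass through Theorem \ref{thm:two} at all: it returns to the M\"{o}bius inversion formula \eqref{eqn:mobinv} and the hypothesis $E_{S}(f,n) \asymp \lambda_{f}^{n}$ to obtain direct term-by-term bounds, namely $C_{S}(f,n) \le \frac{\kappa_{2}}{n}\frac{\lambda_{f}^{n+1}}{\lambda_{f}-1}$ for all $n$ and $C_{S}(f,n) \ge \frac{\kappa_{1}}{2n}\lambda_{f}^{n}$ for $n > N_{0}$ (the latter is exactly inequality \eqref{ineq:mund} from the proof of Theorem \ref{thm:one}), so that the series is squeezed between constant multiples of $\sum_{n}n^{-(k+1)}$, which is then compared with $\int_{1}^{\infty}x^{-(k+1)}\,\mathrm{d}x = 1/k$. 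In particular, your closing claim that a term-by-term comparison of $C_{S}(f,n)/(n^{k}\lambda_{f}^{n})$ with $1/n^{k+1}$ is ``not available'' and that the Abel-summation route is ``forced'' is inaccurate: such a comparison is precisely what the paper performs, drawing on the ingredients of Theorem \ref{thm:one}'s proof rather than on the statement of Theorem \ref{thm:two}. Your route --- partial summation applied to $A_{N} \asymp \log N$, reducing the problem to $k\sum_{n}(\log n)/n^{k+1} \asymp k \cdot k^{-2} = 1/k$ --- is valid and has the merit of deducing Theorem \ref{thm:three} formally from the weaker input of Theorem \ref{thm:two} alone; the price is the extra bookkeeping of $k$-dependent constants that you rightly flag. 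On that point, note that neither argument can make the comparability constants uniform over all $k > 0$: the single term $n = 1$ already contributes a fixed positive constant while $1/k \to 0$ as $k \to \infty$, and the paper's own upper bound $1 + 1/k$ for $\sum_{n}n^{-(k+1)}$ has the same defect. So in both proofs the statement should really be read for $k$ ranging over a bounded set (the interesting regime being $k \to 0^{+}$), where your $((n+1)/n)^{k+1}$-type losses are harmless.
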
 

\begin{theorem} 
\label{thm:four}
Consider the skew-product map $S$ defined on $X$. Let $f : X \longrightarrow \mathbb{R}$ satisfy the hypothesis, as stated in Theorem \ref{thm:one}. Then, 
\begin{equation} 
\sum_{n\, \ge\, 1} \frac{C_{S} (f, n)}{n^{z}}\ \ \asymp\ \ \frac{1}{\zeta (z)} \sum_{n\, \ge\, 1} \frac{\lambda_{f}^{n}}{n^{z + 1}}, 
\end{equation} 
where $\zeta$ is the Riemann zeta function and ${\rm Re} (z) > 1$. 
\end{theorem}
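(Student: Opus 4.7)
\textbf{Plan for Theorem \ref{thm:four}.} I would prove this via the standard Möbius inversion applied to the divisor sum relating $E_S$ and $C_S$, followed by a Dirichlet series factorisation that introduces the reciprocal Riemann zeta factor. The hypothesis $E_S(f,n) \asymp \lambda_f^n$ from Theorem \ref{thm:one} then enters through a term-by-term comparability in the resulting series.

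The first step is to establish, directly from the definitions, the identity $E_S(f,n) = \sum_{d \mid n} d\,C_S(f,d)$. This follows because every $p \in \mathrm{Per}_n(S)$ has prime period $n_p$ dividing $n$, each closed orbit $\tau \in \mathscr{O}_d(S)$ consists of exactly $d$ points, and $e^{f^d(\cdot)}$ is constant along $\tau$ (since the ergodic sum is orbit-invariant); aggregating the periodic-point sum by prime period produces the divisor sum. Möbius inversion then yields $n\,C_S(f,n) = \sum_{d \mid n}\mu(n/d)\,E_S(f,d)$. Substituting this into $\sum_{n \ge 1} C_S(f,n)/n^z$ and interchanging the order of summation via the substitution $n = dk$ gives the factorisation
$$\sum_{n \ge 1}\frac{C_S(f,n)}{n^z}\;=\;\Bigl(\sum_{k \ge 1}\frac{\mu(k)}{k^{z+1}}\Bigr)\Bigl(\sum_{d \ge 1}\frac{E_S(f,d)}{d^{z+1}}\Bigr),$$
in which the first factor is the reciprocal of a Riemann zeta value by the classical Möbius--zeta identity, and the second factor is comparable to $\sum_d \lambda_f^d / d^{z+1}$ by the hypothesis, producing the claimed $\asymp$.

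The main obstacle I expect is convergence: since $\lambda_f > 1$, the series $\sum_d \lambda_f^d/d^{z+1}$ diverges for every $z$ of finite real part, so the identity above must be interpreted either as a relation of formal Dirichlet series or, more substantively, as a comparability of the truncated sums $\sum_{n \le N}$ with constants uniform in $N$. Upgrading the formal identity to such a uniform statement is the key technical step; one would split the Möbius convolution into a head $k \le K_N$ and a tail, verify that both pieces contribute only bounded multiplicative errors when the partial-sum interchange is performed, and finally insert the two-sided bound $\kappa_1 \lambda_f^d \le E_S(f,d) \le \kappa_2 \lambda_f^d$ to obtain the comparability with $\sum_d \lambda_f^d/d^{z+1}$, with the reciprocal zeta factor surviving intact from the Möbius sum.
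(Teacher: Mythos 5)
Your proposal is correct and follows essentially the same route as the paper: the divisor-sum identity $E_{S}(f,n)=\sum_{d\,:\,d|n} d\,C_{S}(f,d)$, a Dirichlet-series factorisation that extracts a reciprocal zeta factor (the paper invokes the convolution identity $\sum a_{n}n^{-z}=\zeta(z)\sum b_{n}n^{-z}$ and divides through, while you expand $1/\zeta(z+1)=\sum_{k}\mu(k)k^{-(z+1)}$ and interchange summation --- the same computation), and finally the termwise bounds $\kappa_{1}\lambda_{f}^{d}\le E_{S}(f,d)\le\kappa_{2}\lambda_{f}^{d}$. The one substantive difference is to your credit: you explicitly flag that $\sum_{d}\lambda_{f}^{d}/d^{z+1}$ diverges for $\lambda_{f}>1$ so the identity needs a formal or truncated reading, a point the paper's proof passes over in silence; note also that both your derivation and the paper's proof produce $\zeta(z+1)$ where the theorem statement displays $\zeta(z)$.
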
 

\section{Proof of Theorem \ref{thm:one}} 
\label{sec:pot}

In \cite{wp:1983}, the orbit counting function for sub-shifts of finite type is defined as $\pi(x) = \displaystyle \sum_{\tau\, :\, |\tau| \leq x} 1$, where $|\tau|$ denotes the length of the closed orbit $\tau$ of the shift map. The notion of asymptotes considered in \cite{wp:1983} is stronger than the notion of comparability that we consider, even as we work with a different dynamical system altogether. In \cite{ana:2015, ana:2017}, the authors focus on comparability results for the prime orbit counting function $\pi_{T}(N) = \displaystyle\sum_{n\, \leq\, N} O_{T} (n)$, where $O_{T} (n)$ is the number of orbits of the map $T$ of length $n$ and $T$ denotes the Dyck shift and Motzkin shift respectively. In this work, we have defined a generalised version of the orbit counting function given by $\pi_{S} (f,N)$ for the skew product map related to a rational semigroup and study comparability results related to it.

Now, we proceed to prove Theorem \ref{thm:one} using Propositions \ref{prop:pisfnlb} and \ref{prop:pisfnub}, that gives a lower and an upper bound for $\pi_{S} (f, n)$, respectively. From the hypothesis of Theorem \ref{thm:one}, there exist constants $\kappa_{1} > 0$ and $\kappa_{2} > 0$ such that $\kappa_{1} \left( \lambda_{f} \right)^{n} \le E_{S} (f, n) \le \kappa_{2} \left( \lambda_{f} \right)^{n}$, for all $n \in \mathbb{Z}_{+}$. Consider 
\begin{equation} 
\label{ineq:ubesfn}
\sum_{n\, \le\, N} E_{S} (f, n)\ \le\ \frac{\kappa_{2}}{\kappa_{1}} E_{S} (f, N) \sum_{n\, \le\, N} \frac{1}{\lambda_{f}^{n}}\ \le\ c_{1} E_{S} (f, N)\ \ \text{where}\ \ c_{1} = \frac{\kappa_{2}}{\kappa_{1}} \frac{\lambda_{f}}{\lambda_{f} - 1}. 
\end{equation} 

%\begin{eqnarray} 
%\label{ineq:ubesfn}
%\sum_{n\, \le\, N} E_{S} (f, n) & = & E_{S} (f, N) \sum_{n\, \le\, N} \frac{E_{S} (f, n)}{E_{S} (f, N)} \nonumber \\ 
%& \le & \frac{\kappa_{2}}{\kappa_{1}} E_{S} (f, N) \sum_{n\, \le\, N} \frac{\lambda_{f}^{n}}{\lambda_{f}^{N}} \nonumber \\ 
%& = & \frac{\kappa_{2}}{\kappa_{1}} E_{S} (f, N) \sum_{n\, \le\, N} \frac{1}{\lambda_{f}^{n}} \nonumber \\ 
%& \le & \frac{\kappa_{2}}{\kappa_{1}} \frac{\lambda_{f}}{\lambda_{f} - 1} E_{S} (f, N) \nonumber \\ 
%& = & c_{1} E_{S} (f, N). 
%\end{eqnarray} 

Also, by a similar computation, 
\begin{equation} 
\label{ineq:lbesfn} 
\sum_{n\, \le\, N} E_{S} (f, n)\ \ \ge\ \ \frac{\kappa_{1}}{\kappa_{2}} E_{S} (f, N). 
\end{equation} 

%\begin{eqnarray} 
%\label{ineq:lbesfn} 
%\sum_{n\, \le\, N} E_{S} (f, n) & = & E_{S} (f, N) \sum_{n\, \le\, N} \frac{E_{S} (f, n)}{E_{S} (f, N)} \nonumber \\ 
%& \ge & \frac{\kappa_{1}}{\kappa_{2}} E_{S} (f, N) \sum_{n\, \le\, N} \frac{\lambda_{f}^{n}}{\lambda_{f}^{N}} \nonumber \\ 
%& \ge & \frac{\kappa_{1}}{\kappa_{2}} E_{S} (f, N). 
%\end{eqnarray} 

%Hence, combining the inequalities  \eqref{ineq:ubesfn} and \eqref{ineq:lbesfn}, we obtain: 
%\begin{equation} 
%\label{eqn:esfnasy} 
%\sum_{n\, \le\, N} E_{S} (f, n)\ \ \asymp\ \ E_{S} (f, N),\ \ \ \forall N \in \mathbb{Z}_{+}. 
%\end{equation} 

%We now obtain a relationship between $C_{S} (f, n)$ and $E_{S} (f, n)$ using the M\"{o}bius inversion formula, that will come in handy in the proofs of the main theorems of this manuscript. One may observe that $C_{S} (f, n) = \dfrac{1}{n} D_{S} (f, n)$. Further, $E_{S} (f, n) = \sum\limits_{d\, :\, d|n} D_{S} (f, d)$. Hence, 

From the definitions of the counting functions, as in Section \ref{sec:prelim}, one may observe using the M\"{o}bius inversion formula that 
\begin{eqnarray} 
\label{eqn:mobinv}
C_{S} (f, n) & = & \frac{1}{n} \sum_{d\; :\; d|n} \mu \left( \frac{n}{d} \right) E_{S} (f, d),\ \ \ \text{where} \\ 
\mu (k) & = & \begin{cases} 1 & \text{if}\ k\ \text{is square-free and has an even number of prime factors,} \\ -1 & \text{if}\ k\ \text{is square-free and has an odd number of prime factors,} \\ 1 & \text{if}\ k = 1, \\ 0 & \text{otherwise}. \end{cases} \nonumber 
\end{eqnarray} 

We now state and prove a proposition that gives a lower bound for $\pi_{S} (f,n)$. 

\begin{proposition}
\label{prop:pisfnlb}
There exists a strictly positive constant, say $\sigma_{1}$ such that 
\[ \pi_{S} (f, N)\ \ \ge\ \ \sigma_{1} \dfrac{\lambda_{f}^{N}}{N}. \] 
\end{proposition}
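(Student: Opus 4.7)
The plan is to use the M\"obius inversion formula \eqref{eqn:mobinv} to isolate the dominant contribution to $C_S(f,n)$ and to exploit the fact that every proper divisor $d$ of $n$ satisfies $d \le n/2$, which yields an exponentially smaller error than the leading term. Concretely, from \eqref{eqn:mobinv} I rewrite
\[ n\,C_S(f,n)\ =\ E_S(f,n)\ +\ \sum_{\substack{d\,\mid\,n \\ d\,<\,n}} \mu\!\left(\tfrac{n}{d}\right)\, E_S(f,d). \]

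Using $|\mu| \le 1$ together with the hypothesis $E_S(f,d) \le \kappa_2 \lambda_f^d$, I would bound the tail by a geometric sum,
\[ \bigg|\sum_{\substack{d\,\mid\,n \\ d\,<\,n}} \mu\!\left(\tfrac{n}{d}\right)\, E_S(f,d)\bigg|\ \le\ \kappa_2 \sum_{d\,=\,1}^{\lfloor n/2 \rfloor} \lambda_f^d\ \le\ \frac{\kappa_2\,\lambda_f}{\lambda_f - 1}\ \lambda_f^{n/2}. \]
Combined with the lower bound $E_S(f,n) \ge \kappa_1 \lambda_f^n$ from the hypothesis, this delivers
\[ n\,C_S(f,n)\ \ge\ \lambda_f^n \!\left( \kappa_1\ -\ \frac{\kappa_2\,\lambda_f}{\lambda_f - 1}\ \lambda_f^{-n/2}\right). \]

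Since $\lambda_f > 1$, I can choose an integer $n_0$ such that the parenthesised quantity exceeds $\kappa_1/2$ for all $n \ge n_0$; this immediately gives $C_S(f,n) \ge \frac{\kappa_1}{2n}\lambda_f^n$ for $n \ge n_0$, and hence $\pi_S(f,N) \ge C_S(f,N) \ge \frac{\kappa_1}{2N}\lambda_f^N$ for $N \ge n_0$. For the finitely many $N < n_0$, the ratio $\lambda_f^N/N$ is bounded above; since $E_S(f,1) \ge \kappa_1 \lambda_f > 0$ forces $C_S(f,1) > 0$ (as $1$ is its only divisor) and $\pi_S(f,N)$ is non-decreasing in $N$, the desired inequality will hold for all $N \in \mathbb{Z}_+$ after shrinking $\sigma_1$ to absorb these finitely many cases.

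The main obstacle I anticipate is the bookkeeping around the threshold $n_0$: producing a single constant $\sigma_1$ that handles both the asymptotic regime and the finitely many small-$N$ exceptions uniformly. The heart of the argument, namely the exponential damping $\lambda_f^{-n/2}$ of the M\"obius error relative to the main term $E_S(f,n) \asymp \lambda_f^n$, makes the asymptotic estimate itself quite clean and is the key structural input from the hypothesis $\lambda_f > 1$.
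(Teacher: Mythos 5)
Your proposal is correct and follows essentially the same route as the paper: Möbius inversion, isolating the $d=n$ term, bounding the proper-divisor contribution by a geometric sum up to $\lfloor n/2\rfloor$ to get $C_S(f,n)\ge \frac{\kappa_1}{2n}\lambda_f^n$ for large $n$, and then keeping only the $n=N$ term of $\pi_S(f,N)$. If anything, your explicit handling of the finitely many small $N$ via positivity of $C_S(f,1)$ and monotonicity of $\pi_S$ is more careful than the paper's.
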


\begin{proof} 
Making use of the definition of $\pi_{S} (f, N)$ and Equation \eqref{eqn:mobinv}, we get 
\[ \pi_{S} (f, N)\ =\ \sum_{n\, \le\, N} \frac{1}{n} \sum_{d\, :\, d|n} \mu \left( \frac{n}{d} \right) E_{S} (f, d). \]
Keeping in mind the notation that $\left[ x \right]$ represents the greatest integer less than or equal to $x$, we consider 
\begin{eqnarray} 
\label{ineq:mund}
\sum_{d\, :\, d|n} \mu \left( \frac{n}{d} \right) E_{S} (f, d) & = & \sum_{d\, <\, n\ :\ d|n} \mu \left( \frac{n}{d} \right) E_{S} (f, d) + \mu(1) E_{S} (f, n) \nonumber \\ 
%& \ge & E_{S} (f, n) - \sum_{d\, \le\, \left[ \frac{n}{2} \right]\ :\ d|n} E_{S} (f, d) \nonumber \\ 
& \ge & E_{S} (f, n) - \sum_{d\, \le\, \left[ \frac{n}{2} \right]} E_{S} (f, d) \nonumber \\ 
%& \ge & \kappa_{1} \lambda_{f}^{n} - c_{1} E_{S} \left( f, \left[ \frac{n}{2} \right] \right) \nonumber \\ 
%& \ge & \kappa_{1} \lambda_{f}^{n} - c_{1} \kappa_{2} \lambda_{f}^{\left[ \frac{n}{2} \right]} \nonumber \\ 
& = & \kappa_{1} \lambda_{f}^{n} \left( 1 - c_{1} \frac{\kappa_{2}}{\kappa_{1}} \frac{1}{\lambda_{f}^{\frac{n}{2}}} \right) \nonumber \\ 
& \ge & \frac{\kappa_{1}}{2} \lambda_{f}^{n},\ \hspace{+4cm} \forall n > N_{0}. 
\end{eqnarray} 

Thus, 
\[ \pi_{S} (f, N)\ \ge\ \sum_{n\, =\, 0}^{N_{0}} \frac{1}{n} \sum_{d\, :\, d|n} \mu \left( \frac{n}{d} \right) E_{S} (f, d) + \sum_{n\, >\, N_{0}} \frac{1}{n} \sum_{d\, :\, d|n} \mu \left( \frac{n}{d} \right) E_{S} (f, d)\ \ge\ \frac{\kappa_{1}}{2} \frac{\lambda_{f}^{N}}{N}. \]
Choosing $\sigma_{1} = \dfrac{\kappa_{1}}{2}$, completes the proof. 
\end{proof} 

The next proposition gives us an upper bound for $\pi_{S} (f, N)$, thereby completing the proof of Theorem \ref{thm:one}. 

\begin{proposition} 
\label{prop:pisfnub}
There exists a strictly positive constant, say $\sigma_{2}$ such that 
\[ \pi_{S} (f, N)\ \ \le\ \ \sigma_{2} \dfrac{\lambda_{f}^{N}}{N}. \] 
\end{proposition}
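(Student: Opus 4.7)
The plan is to mirror the strategy of Proposition~\ref{prop:pisfnlb}, combining the M\"obius inversion formula \eqref{eqn:mobinv} with the upper half of the hypothesis $E_{S}(f, n) \asymp \lambda_{f}^{n}$. First, I would use the trivial bound $\mu(n/d) \le 1$ together with the positivity of each $E_{S}(f, d)$ to write
\[
C_{S}(f, n) \;\le\; \frac{1}{n} \sum_{d\,:\,d|n} E_{S}(f, d) \;\le\; \frac{1}{n} \sum_{d\,\le\,n} E_{S}(f, d).
\]
Applying inequality~\eqref{ineq:ubesfn} followed by the hypothesis $E_{S}(f, n) \le \kappa_{2} \lambda_{f}^{n}$ then yields the pointwise bound $C_{S}(f, n) \le \dfrac{c_{1} \kappa_{2}}{n}\, \lambda_{f}^{n}$.

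Summing over $n \le N$ gives
\[
\pi_{S}(f, N) \;\le\; c_{1} \kappa_{2} \sum_{n\,\le\,N} \frac{\lambda_{f}^{n}}{n},
\]
so the remaining task is to show that this weighted geometric sum is bounded by a constant multiple of $\lambda_{f}^{N}/N$. I would split the sum at $\left[ N/2 \right]$. For the tail $n > \left[ N/2 \right]$, the estimate $1/n \le 2/N$ gives a contribution of at most
\[
\frac{2}{N} \sum_{n\,\le\,N} \lambda_{f}^{n} \;\le\; \frac{2\lambda_{f}}{(\lambda_{f}-1)\,N}\, \lambda_{f}^{N},
\]
which is exactly of the desired order $\lambda_{f}^{N}/N$. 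For the head $n \le \left[ N/2 \right]$, the crude bound $1/n \le 1$ yields $\sum_{n\,\le\,\left[ N/2 \right]} \lambda_{f}^{n} \le \frac{\lambda_{f}}{\lambda_{f}-1}\, \lambda_{f}^{N/2}$, which is of strictly smaller order than $\lambda_{f}^{N}/N$ because $\lambda_{f} > 1$ forces $N \lambda_{f}^{-N/2} \to 0$. The finitely many small values of $N$ for which this asymptotic dominance has not yet set in can be absorbed into the constant, and an appropriate choice of $\sigma_{2}$ then completes the proof.

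I do not foresee any genuine obstacle. The M\"obius inversion step is essentially symmetric with the lower-bound argument, the only difference being that we discard the signs in $\mu$ to obtain an upper bound instead of isolating the $d = n$ term. The tail-versus-head splitting for $\sum_{n\,\le\,N} \lambda_{f}^{n}/n$ is the standard way to handle geometric-type sums with a $1/n$ weight when the common ratio exceeds one, and the head contribution is automatically of lower order because $\lambda_{f} > 1$.
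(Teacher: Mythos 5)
Your argument is correct, and it reaches the bound by a cleaner decomposition than the paper's. You first extract the pointwise estimate $C_{S}(f,n) \le \dfrac{c_{1}\kappa_{2}}{n}\,\lambda_{f}^{n}$ by discarding the signs of $\mu$ and applying \eqref{ineq:ubesfn} to the full divisor sum, and then control $\sum_{n \le N} \lambda_{f}^{n}/n$ by splitting at $\left[ N/2 \right]$: on the upper half $1/n \le 2/N$ plus a geometric sum gives the main term of order $\lambda_{f}^{N}/N$, while the lower half is $O\!\left( \lambda_{f}^{N/2} \right)$ and hence of strictly smaller order since $\lambda_{f} > 1$. The paper instead keeps the divisor sum split into the proper-divisor part ($d < n$, bounded via $E_{S}\left( f, \left[ n/2 \right] \right)$) and the diagonal part $E_{S}(f,n)/n$, and then controls the diagonal part with a window of width $\left[ N^{\frac{1}{6}} \right]$ near $N$ as in \eqref{ineq:tsfnub}; this is more involved, and the exponent $\frac{1}{6}$ plays no essential role. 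Your pointwise bound is in fact the same one the paper rederives in the proof of Theorem \ref{thm:three}, so your route has the added merit of unifying the two computations. Two small points to make explicit in a final write-up: each $E_{S}(f,d)$ is strictly positive (guaranteed by the lower half of the hypothesis), so dropping $\mu$ does give a term-by-term upper bound; and $\sup_{N \ge 1} N \lambda_{f}^{-N/2} < \infty$, which is what lets you absorb the head contribution into the constant uniformly in $N$, as the paper's definition of $\asymp$ (valid for all $N \in \mathbb{Z}_{+}$) requires.
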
 

\begin{proof} 
Since $\mu (\cdot)$ is atmost $1$, we obtain 
\begin{eqnarray} 
\label{ineq:pisfnub} 
\pi_{S} (f, n) & \le & \left[ \sum_{n\, \le\, N - \left[ N^{\frac{1}{6}} \right]} \frac{1}{n} \sum_{d\, <\, n\; :\; d|n} E_{S} (f, d) + \sum_{n\, \le\, N - \left[ N^{\frac{1}{6}} \right]} \frac{1}{n} E_{S} (f, n) \right] \nonumber \\ 
& & +\ \left[ \sum_{N - \left[ N^{\frac{1}{6}} \right]\, <\, n\, \le\, N} \frac{1}{n} \sum_{d\, <\, n\; :\; d|n} E_{S} (f, d)  + \sum_{N - \left[ N^{\frac{1}{6}} \right]\, <\, n\, \le\, N} \frac{1}{n} E_{S} (f, n) \right] \nonumber \\ 
& \le & c_{1} \sum_{n\, \le\, N} E_{S} \left( f, \left[\frac{n}{2}\right] \right) + \sum_{n\, \le\, N - \left[ N^{\frac{1}{6}} \right]} \frac{1}{n} E_{S} (f, n) + \sum_{N - \left[ N^{\frac{1}{6}} \right]\, <\, n\, \le\, N} \frac{1}{n} E_{S} (f, n), \nonumber \\ 
\end{eqnarray} 

%\begin{eqnarray*} 
%\pi_{S} (f, n) & \le & \sum_{n\, \le\, N} \frac{1}{n} \sum_{d\, :\, d|n} E_{S} (f, d) \nonumber 
%\end{eqnarray*} 
%\begin{eqnarray} 
%\label{ineq:pisfnub} 
%\hspace{+1.8cm} & = & \sum_{n\, \le\, N - \left[ N^{\frac{1}{6}} \right]} \frac{1}{n} \sum_{d\, :\, d|n} E_{S} (f, d) + \sum_{N - \left[ N^{\frac{1}{6}} \right]\, <\, n\, \le\, N} \frac{1}{n} \sum_{d\, :\, d|n} E_{S} (f, d) \nonumber \\ 
%& = & \left[ \sum_{n\, \le\, N - \left[ N^{\frac{1}{6}} \right]} \frac{1}{n} \sum_{d\, <\, n\; :\; d|n} E_{S} (f, d) + \sum_{n\, \le\, N - \left[ N^{\frac{1}{6}} \right]} \frac{1}{n} E_{S} (f, n) \right] \nonumber \\ 
%& & +\ \left[ \sum_{N - \left[ N^{\frac{1}{6}} \right]\, <\, n\, \le\, N} \frac{1}{n} \sum_{d\, <\, n\; :\; d|n} E_{S} (f, d)  + \sum_{N - \left[ N^{\frac{1}{6}} \right]\, <\, n\, \le\, N} \frac{1}{n} E_{S} (f, n) \right] \nonumber \\ 
%& \le & c_{1} \sum_{n\, \le\, N} E_{S} \left( f, \left[\frac{n}{2}\right] \right) + \sum_{n\, \le\, N - \left[ N^{\frac{1}{6}} \right]} \frac{1}{n} E_{S} (f, n) + \sum_{N - \left[ N^{\frac{1}{6}} \right]\, <\, n\, \le\, N} \frac{1}{n} E_{S} (f, n), \nonumber \\ 
%\end{eqnarray} 

%since 
%\[ \sum_{d\, <\, n\; :\; d|n} E_{S} (f, d)\ =\ \sum_{d\, <\, \left[\frac{n}{2}\right]\; :\; d|n} E_{S} (f, d)\ \le\ c_{1} E_{S} \left( f, \left[\frac{n}{2}\right] \right), \] 
where the first term above is obtained, by virtue of inequality \eqref{ineq:ubesfn}. We now obtain a bound on the third term in the right hand side of the inequality \eqref{ineq:pisfnub}. 
\begin{eqnarray} 
\label{ineq:tsfnub}
\sum_{N - \left[ N^{\frac{1}{6}} \right]\, <\, n\, \le\, N} \frac{1}{n} E_{S} (f, n) & = & \frac{E_{S} (f, N)}{N}\ \sum_{r\, =\, 0}^{\left[ N^{\frac{1}{6}} \right] - 1} \frac{N}{E_{S} (f, N)} \frac{E_{S} (f, N - r)}{N - r} \nonumber \\ 
& \le & \frac{\kappa_{2} \lambda_{f}^{N}}{N}\ \sum_{r\, =\, 0}^{\left[ N^{\frac{1}{6}} \right] - 1} \frac{\kappa_{2}}{\kappa_{1}} \frac{1}{\lambda_{f}^{r}} \sum_{n\, \ge\, 0} \left( \frac{r}{N} \right)^{n} \nonumber \\ 
%& \le & \frac{\kappa_{2}^{2}}{\kappa_{1}} \frac{\lambda_{f}^{N}}{N} \left( \frac{\lambda_{f}}{\lambda_{f} - 1} + \sum_{r\, =\, 0}^{\left[ N^{\frac{1}{6}} \right] - 1} \sum_{n\, \ge\, 1} \left( \frac{r}{N} \right)^{n} \right) \nonumber \\ 
& \le & \frac{\kappa_{2}^{2}}{\kappa_{1}} \frac{\lambda_{f}^{N}}{N} \left( \frac{\lambda_{f}}{\lambda_{f} - 1} + \sum_{r\, =\, 0}^{\left[ N^{\frac{1}{6}} \right] - 1} \frac{r}{N - r} \right) \nonumber \\ 
& \le & \frac{\kappa_{2}^{2}}{\kappa_{1}} \frac{\lambda_{f}^{N}}{N} \left( \frac{\lambda_{f}}{\lambda_{f} - 1} + \frac{\left[ N^{\frac{1}{3}} \right]}{N - \left[ N^{\frac{1}{6}} \right]} \right). 
\end{eqnarray} 

Thus, plugging in the upper bound for each of the terms in inequality \eqref{ineq:pisfnub} using the inequalities in \eqref{ineq:ubesfn} and \eqref{ineq:tsfnub}, we obtain 
\begin{eqnarray*} 
\pi_{S} (f, N) & \le & c_{1}^{2} E_{S} \left( f, \left[\frac{N}{2}\right] \right) + c_{1} E_{S} \left( f, N - \left[ N^{\frac{1}{6}} \right] \right) + \frac{\kappa_{2}^{2}}{\kappa_{1}} \frac{\lambda_{f}^{N}}{N} \left( \frac{\lambda_{f}}{\lambda_{f} - 1} + \frac{\left[ N^{\frac{1}{3}} \right]}{N - \left[ N^{\frac{1}{6}} \right]} \right) \\ 
%& \le & c_{1}^{2} \kappa_{2} \lambda_{f}^{\left[\frac{N}{2}\right]} + c_{1} \kappa_{2} \lambda_{f}^{N - \left[ N^{\frac{1}{6}} \right]} + \frac{\kappa_{2}^{2}}{\kappa_{1}} \frac{\lambda_{f}^{N}}{N} \left( \frac{\lambda_{f}}{\lambda_{f} - 1} + \frac{\left[ N^{\frac{1}{3}} \right]}{N - \left[ N^{\frac{1}{6}} \right]} \right) \\ 
& \le & \kappa_{3} \frac{\lambda_{f}^{N}}{N} \left( \frac{N}{\lambda_{f}^{\left[\frac{N}{2}\right]}} + \frac{N}{\lambda_{f}^{\left[ N^{\frac{1}{6}} \right]}} + 1 + \frac{\left[ N^{\frac{1}{3}} \right]}{N - \left[ N^{\frac{1}{6}} \right]} \right), 
\end{eqnarray*}

%\begin{eqnarray*} 
%\pi_{S} (f, N) & \le & c_{1} \sum_{n\, \le\, N} E_{S} \left( f, \left[\frac{n}{2}\right] \right) + \sum_{n\, \le\, N - \left[ N^{\frac{1}{6}} \right]} \frac{1}{n} E_{S} (f, n) + \sum_{N - \left[ N^{\frac{1}{6}} \right]\, <\, n\, \le\, N} \frac{1}{n} E_{S} (f, n) \\ 
%& \le & c_{1}^{2} E_{S} \left( f, \left[\frac{N}{2}\right] \right) + c_{1} E_{S} \left( f, N - \left[ N^{\frac{1}{6}} \right] \right) + \frac{\kappa_{2}^{2}}{\kappa_{1}} \frac{\lambda_{f}^{N}}{N} \left( \frac{\lambda_{f}}{\lambda_{f} - 1} + \frac{\left[ N^{\frac{1}{3}} \right]}{N - \left[ N^{\frac{1}{6}} \right]} \right) 
%\end{eqnarray*} 
%\begin{eqnarray*} 
%\hspace{+1.5cm} & \le & c_{1}^{2} \kappa_{2} \lambda_{f}^{\left[\frac{N}{2}\right]} + c_{1} \kappa_{2} \lambda_{f}^{N - \left[ N^{\frac{1}{6}} \right]} + \frac{\kappa_{2}^{2}}{\kappa_{1}} \frac{\lambda_{f}^{N}}{N} \left( \frac{\lambda_{f}}{\lambda_{f} - 1} + \frac{\left[ N^{\frac{1}{3}} \right]}{N - \left[ N^{\frac{1}{6}} \right]} \right) \\ 
%& \le & \kappa_{3} \frac{\lambda_{f}^{N}}{N} \left( \frac{N}{\lambda_{f}^{\left[\frac{N}{2}\right]}} + \frac{N}{\lambda_{f}^{\left[ N^{\frac{1}{6}} \right]}} + 1 + \frac{\left[ N^{\frac{1}{3}} \right]}{N - \left[ N^{\frac{1}{6}} \right]} \right), 
%\end{eqnarray*} 
yielding an upper bound for $\pi_{S} (f, N)$ for an appropriate $\sigma_{2}$. 
\end{proof} 

\section{Proofs of Theorems \ref{thm:two}, \ref{thm:three} and \ref{thm:four}}
\label{sec:mmd}
In number theory, Mertens' prime counting function is defined as $\mathcal{M}(x) = \displaystyle\sum_{p\, \leq\, x} \frac{1}{p}$, where $p$ is a prime. In \cite{ana:2015, ana:2017}, the authors work with the counting function given by $\mathcal{M}(N) = \displaystyle \sum_{n\, \leq\, N} \frac{O_{T} (n)}{e^{hn}}$, where $O_{T} (n)$ denotes the number of orbits of length $n$ for the Dyck shift and the Motzkin shift respectively and $h$ denotes the topological entropy of the respective shift. In Theorem \ref{thm:two}, we consider a generalisation of this function and have derived a comparability result by considering the orbits of the skew product map. In Theorem \ref{thm:three}, we consider the generalised Meissel's sum $\displaystyle{\sum\limits_{n\, \ge\, 1} \dfrac{C_{S} (f, n)}{n^{k} \lambda_{f}^{n}}}$ for some fixed $k > 0$, even as the authors, in \cite{ana:2015, ana:2017}, consider the sum $\displaystyle \sum_{n\, \ge\, 1} \dfrac{O_{T}(n)}{n^{k} e^{hn}}$ for some fixed $k > 0$. Moreover, we study the Dirichlet series of $C_{S} (f,n)$ in Theorem \ref{thm:four}. With this as the backdrop, we proceed to prove the remainder of the theorems. We begin with the proof of Theorem \ref{thm:two}. 

\begin{proof}[of Theorem \ref{thm:two}] 

We know, from Equation \eqref{eqn:mobinv} that 
\begin{eqnarray} 
\label{ineq:mertlb} 
\sum_{n\, \le\, N} \frac{C_{S} (f, n)}{\lambda_{f}^{n}} & = & \sum_{n\, \le\, N_{0}} \frac{1}{n} \frac{1}{\lambda_{f}^{n}} \sum_{d\; :\; d|n} \mu \left( \frac{n}{d} \right) E_{S} (f, d)\ +\ \sum_{n\, >\, N_{0}}^{N} \frac{1}{n} \frac{1}{\lambda_{f}^{n}} \sum_{d\; :\; d|n} \mu \left( \frac{n}{d} \right) E_{S} (f, d) \nonumber \\ 
& & \hspace{+3cm} (\text{where $N_{0}$ is determined as in the inequality \eqref{ineq:mund}}) \nonumber \\ 
& \ge & \text{some constant, independent of $N$}\ +\ \sum_{n\, >\, N_{0}}^{N} \frac{1}{n} \frac{\kappa_{1}}{2}. 
\end{eqnarray} 

%\begin{eqnarray*} 
%\sum_{n\, \le\, N} \frac{C_{S} (f, n)}{\lambda_{f}^{n}} & = & \sum_{n\, \le\, N} \frac{1}{n} \frac{1}{\lambda_{f}^{n}} \sum_{d\; :\; d|n} \mu \left( \frac{n}{d} \right) E_{S} (f, d) \nonumber \\ 
%& = & \sum_{n\, \le\, N_{0}} \frac{1}{n} \frac{1}{\lambda_{f}^{n}} \sum_{d\; :\; d|n} \mu \left( \frac{n}{d} \right) E_{S} (f, d)\ +\ \sum_{n\, >\, N_{0}}^{N} \frac{1}{n} \frac{1}{\lambda_{f}^{n}} \sum_{d\; :\; d|n} \mu \left( \frac{n}{d} \right) E_{S} (f, d) \nonumber \\ 
%& & \hspace{+3cm} (\text{where $N_{0}$ is determined as in the inequality \eqref{ineq:mund}}) \nonumber 
%\end{eqnarray*} 
%\begin{eqnarray} 
%\label{ineq:mertlb} 
%\hspace{+2cm} & \ge & \text{some constant, independent of $N$}\ +\ \sum_{n\, >\, N_{0}}^{N} \frac{1}{n} \frac{\kappa_{1}}{2}. 
%\end{eqnarray} 

Further, to obtain an upper bound, we split the summation into two parts, as in the proof of Theorem \ref{thm:one}; {\it i.e.}, when $d < n$ and when $d = n$. For $d < n$, we have 
\begin{eqnarray} 
\label{ineq:mertub1} 
\sum_{n\, \le\, N} \frac{1}{n} \frac{1}{\lambda_{f}^{n}} \sum_{d < n\; :\; d|n} \mu \left( \frac{n}{d} \right) E_{S} (f, d) & \le & \sum_{n\, \le\, N} \frac{1}{\lambda_{f}^{n}} c_{1} E_{S} \left( f, \left[ \frac{n}{2} \right] \right) \nonumber \\ 
& \le & c_{1} \kappa_{2} \sum_{n\, \le\, N} \frac{1}{\lambda_{f}^{n - \left[ \frac{n}{2} \right]}} \nonumber \\ 
& = & \text{some constant}. 
\end{eqnarray} 

%\begin{eqnarray} 
%\label{ineq:mertub1} 
%\sum_{n\, \le\, N} \frac{1}{n} \frac{1}{\lambda_{f}^{n}} \sum_{d < n\; :\; d|n} \mu \left( \frac{n}{d} \right) E_{S} (f, d) & \le & \sum_{n\, \le\, N} \frac{1}{\lambda_{f}^{n}} \sum_{d < n\; :\; d|n} E_{S} (f, d) \nonumber \\ 
%& \le & \sum_{n\, \le\, N} \frac{1}{\lambda_{f}^{n}} c_{1} E_{S} \left( f, \left[ \frac{n}{2} \right] \right) \nonumber \\ 
%& \le & c_{1} \kappa_{2} \sum_{n\, \le\, N} \frac{1}{\lambda_{f}^{n - \left[ \frac{n}{2} \right]}} \nonumber \\ 
%& = & \text{some constant}. 
%\end{eqnarray} 
And for $d = n$, we have 
\begin{equation} 
\label{ineq:mertub2} 
\sum_{n\, \le\, N} \frac{1}{n} \frac{1}{\lambda_{f}^{n}} E_{S} (f, n)\ \ \le\ \ \kappa_{2} \sum_{n\, \le\, N} \frac{1}{n}. 
\end{equation} 

We now state a theorem, as may be found in \cite{hw:2008}, that completes the proof. 

\begin{theorem}[\cite{hw:2008}, Theorem 422, Page 461] 
\label{thm:eulermasch} 
For $x \in \mathbb{R}_{+}$, we have 
\[ \sum_{n\, \le\, x} \frac{1}{n}\ \ =\ \ \log x + \gamma + O \left( \frac{1}{x} \right), \] 
where $\gamma$ denotes the Euler-Mascheroni constant. 
\end{theorem}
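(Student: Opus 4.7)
The statement is a classical asymptotic for the harmonic sum, so the plan is to proceed in three elementary steps: first establish the convergence of the sequence $\gamma_{N} := \sum_{n = 1}^{N} \frac{1}{n} - \log N$, then quantify the rate of its convergence, and finally extend the result from integer $N$ to arbitrary real $x \in \mathbb{R}_{+}$.

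For the first step, I would exploit the monotonicity of $t \mapsto 1/t$ on $\mathbb{R}_{+}$, which gives the two-sided comparison $\int_{n}^{n+1} \frac{dt}{t} \le \frac{1}{n} \le \int_{n-1}^{n} \frac{dt}{t}$. Summing these inequalities yields $\log(N+1) \le \sum_{n \le N} \frac{1}{n} \le 1 + \log N$, so $\gamma_{N} \in (0, 1]$ for all $N$. A direct computation shows $\gamma_{N} - \gamma_{N+1} = \log(1 + 1/N) - 1/(N+1)$, which is positive by the elementary inequality $\log(1+x) > x/(1+x)$ for $x > 0$. Hence $\gamma_{N}$ is a bounded monotone decreasing sequence, so it converges to some limit $\gamma$, which one calls the Euler-Mascheroni constant.

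For the second step, I would write $\gamma_{N} - \gamma$ as the telescoping tail $\sum_{k \ge N} (\gamma_{k} - \gamma_{k+1})$, and estimate each term using the Taylor expansion $\log(1 + 1/k) = 1/k - 1/(2k^{2}) + O(1/k^{3})$. This gives
\[ \gamma_{k} - \gamma_{k+1}\ =\ \frac{1}{k(k+1)} - \frac{1}{2k^{2}} + O \left( \frac{1}{k^{3}} \right)\ =\ O \left( \frac{1}{k^{2}} \right), \]
and summing over $k \ge N$ yields $\gamma_{N} - \gamma = O(1/N)$. To pass from integer $N$ to real $x$, note that $\sum_{n \le x} 1/n = \sum_{n \le [x]} 1/n = \log [x] + \gamma + O(1/[x])$, and one finishes by observing that $\log x - \log [x] = \log(1 + \{x\}/[x]) = O(1/x)$, where $\{x\}$ denotes the fractional part of $x$; this error absorbs into the existing $O(1/x)$ term.

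The argument is entirely routine elementary analysis; the one delicate point is the telescoping manipulation in the second step, which converts the abstract existence of $\gamma$ into the explicit $O(1/N)$ error bound. An alternative approach via the Euler--Maclaurin summation formula would furnish the same asymptote (in fact, a complete asymptotic expansion in powers of $1/N$ with Bernoulli number coefficients), but the comparison-with-integrals argument sketched above is more elementary and already suffices for the stated error term.
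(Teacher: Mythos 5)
Your proof is correct. Note, however, that the paper does not prove this statement at all --- it is quoted verbatim from Hardy and Wright (Theorem 422) as a known classical fact, so there is no internal argument to compare against. Your integral-comparison proof (monotone bounded sequence $\gamma_N$, telescoped tail estimate giving the $O(1/N)$ rate, and the passage from integer $N$ to real $x$ via $\log x - \log[x] = O(1/x)$) is the standard elementary derivation and is complete as sketched.
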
 

Thus, combining the bounds in the inequalities \eqref{ineq:mertlb}, \eqref{ineq:mertub1}, \eqref{ineq:mertub2} and using Theorem \ref{thm:eulermasch}, we get positive constants $\sigma_{3}$ and $\sigma_{4}$ such that 
\[ \sigma_{3} \log N\ \ \le\ \ \sum_{n\, \le\, N} \frac{C_{S} (f, n)}{\lambda_{f}^{n}}\ \ \le\ \ \sigma_{4} \log N. \] 
\vspace{-9pt} 
\end{proof} 

We now prove Theorem \ref{thm:three}. 

\begin{proof}[of Theorem \ref{thm:three}]
We know from Equation \eqref{eqn:mobinv} that 
\[ C_{S} (f, n)\ =\ \frac{1}{n} \sum_{d\, :\, d|n} \mu \left( \frac{n}{d} \right) E_{S} (f, d)\ \le\ \frac{\kappa_{2}}{n} \sum_{d\, =\, 1}^{n} \lambda_{f}^{d}\ \le\ \frac{\kappa_{2}}{n} \frac{\lambda_{f}^{n + 1}}{\lambda_{f} - 1}. \] 
%\begin{eqnarray*} 
%C_{S} (f, n) & = & \frac{1}{n} \sum_{d\, :\, d|n} \mu \left( \frac{n}{d} \right) E_{S} (f, d)\ \le\ \frac{1}{n} \sum_{d\, =\, 1}^{n} E_{S} (f, d) \\ 
%& \le & \frac{\kappa_{2}}{n} \sum_{d\, =\, 1}^{n} \lambda_{f}^{d}\ \le\ \frac{\kappa_{2}}{n} \frac{\lambda_{f} \left( \lambda_{f}^{n} - 1 \right)}{\lambda_{f} - 1}\ \le\ \frac{\kappa_{2}}{n} \frac{\lambda_{f}^{n + 1}}{\lambda_{f} - 1}. 
%\end{eqnarray*} 

On the other hand, we know from Equation \eqref{ineq:mund} that 
\[ C_{S} (f, n)\ \ \ge\ \ \frac{\kappa_{1}}{2n} \lambda_{f}^{n}\ \ \ \ \forall n > N_{0}. \] 

Hence, 
\begin{eqnarray*} 
\sum_{n\, \ge\, 1} \frac{C_{S} (f, n)}{n^{k} \lambda_{f}^{n}} & \le & \frac{\kappa_{2} \lambda_{f}}{\lambda_{f} - 1} \sum_{n\, \ge\, 1} \frac{1}{n^{k + 1}}\ \hspace{+2cm}\ \text{and} \\ 
\sum_{n\, \ge\, 1} \frac{C_{S} (f, n)}{n^{k} \lambda_{f}^{n}} & \ge & \sum_{n\, =\, 1}^{N_{0}} \frac{C_{S} (f, n)}{n^{k} \lambda_{f}^{n}} + \frac{\kappa_{1}}{2} \sum_{n\, >\, N_{0}} \frac{1}{n^{k + 1}}. 
\end{eqnarray*} 

Since we know from Riemann integration that 
\[ \frac{1}{k}\ =\ \int_{1}^{\infty} \frac{1}{x^{k + 1}} \mathrm{d}x\ \le\ \sum_{n\, \ge\, 1} \frac{1}{n^{k + 1}}\ \le\ 1 + \int_{1}^{\infty} \frac{1}{x^{k + 1}} \mathrm{d}x\ =\ 1 + \frac{1}{k}, \]
we conclude that exists positive constants, say $\sigma_{5}$ and $\sigma_{6}$ such that 
\[ \frac{\sigma_{5}}{k}\ \ \le\ \ \sum_{n\, \ge\, 1} \frac{C_{S} (f, n)}{n^{k} \lambda_{f}^{n}}\ \ \le\ \ \frac{\sigma_{6}}{k}. \]
\end{proof} 

Finally, we prove Theorem \ref{thm:four} after stating a theorem, as can be found in \cite{hw:2008}. 

\begin{theorem}[\cite{hw:2008}, Section 17.6, Page 328]
\label{thm:dirc} 
Suppose $\left\{ a_{n} \right\}$ and $\left\{ b_{n} \right\}$ are sequences that are related as $a_{n} = \sum\limits_{d\, :\, d|n} b_{d}$, then 
\[ \sum\limits_{n\, \ge\, 1} \dfrac{a_{n}}{n^{z}}\ \ =\ \ \zeta (z) \sum\limits_{n\, \ge\, 1} \dfrac{b_{n}}{n^{z}}, \] 
where $\zeta$ denotes the Riemann zeta function and ${\rm Re}(z) > 1$.  
\end{theorem}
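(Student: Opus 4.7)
The plan is to derive Theorem \ref{thm:four} by combining Theorem \ref{thm:dirc} with the Dirichlet-convolution identity that is dual to \eqref{eqn:mobinv}. The first step is to establish
\[ E_S(f,n)\ =\ \sum_{d\,:\,d|n} d\cdot C_S(f,d), \]
which is exactly the Möbius inverse of \eqref{eqn:mobinv}. It also admits a transparent dynamical explanation: writing ${\rm Per}_n(S) = \bigsqcup_{d|n} {\rm Per}'_d(S)$ and observing that each closed orbit of prime period $d$ accounts for $d$ periodic points on which $f^d$ takes a constant value, one gets $D_S(f,d) = d\,C_S(f,d)$, and summing over $d|n$ recovers $E_S(f,n)$.

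With this identity in place, I set $a_n = E_S(f,n)$ and $b_n = n\,C_S(f,n)$, so that $a_n = \sum_{d|n} b_d$ and the hypothesis of Theorem \ref{thm:dirc} is fulfilled. That theorem then gives
\[ \sum_{n \ge 1} \frac{E_S(f,n)}{n^{s}}\ =\ \zeta(s) \sum_{n \ge 1} \frac{n\,C_S(f,n)}{n^{s}}\ =\ \zeta(s) \sum_{n \ge 1} \frac{C_S(f,n)}{n^{s-1}}. \]
Shifting the parameter so that the right-hand Dirichlet series coincides with the target $\sum_n C_S(f,n)/n^z$ and dividing through by $\zeta(s)$ produces an exact identity expressing $\sum_n C_S(f,n)/n^z$ as $\zeta(\cdot)^{-1}$ times a Dirichlet series in $E_S(f,n)$ with the denominator $n^{z+1}$. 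The final substitution uses the hypothesis $E_S(f,n) \asymp (\lambda_f)^n$ termwise in the right-hand series; since every term is positive, termwise comparability upgrades to comparability of the sums, delivering the claimed comparison with $\zeta(\cdot)^{-1} \sum_n \lambda_f^n/n^{z+1}$.

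The main obstacle I anticipate is that $\lambda_f > 1$ makes every Dirichlet series occurring above divergent for each $z$, so the exact identity coming from Theorem \ref{thm:dirc} must be read either formally, as a relation between generating objects whose coefficients satisfy the Dirichlet-convolution identity, or as a comparability of the finite partial sums $\sum_{n \le N}$ that is uniform in $N$. In the latter interpretation, one must verify that truncating the divisor-sum identity introduces only an error that is absorbed by the comparability constants, in the same spirit as the small-$d$ versus large-$d$ splittings used in Proposition \ref{prop:pisfnlb} and in the proof of Theorem \ref{thm:two}; the boundedness of $\zeta$ on every half-plane $\mathrm{Re}(z) > 1$ then ensures the comparability constants can be chosen independently of $z$.
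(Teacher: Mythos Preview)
There is a mismatch between the quoted statement and your proposal. The statement is Theorem~\ref{thm:dirc}, a classical Dirichlet-series identity that the paper merely \emph{cites} from Hardy--Wright and does not prove; there is therefore no ``paper's own proof'' of it to compare against. Your proposal does not prove Theorem~\ref{thm:dirc} either: it takes Theorem~\ref{thm:dirc} as a black box and uses it to derive Theorem~\ref{thm:four}.

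If the intended target was Theorem~\ref{thm:four}, then your argument is essentially the same as the paper's. The paper also starts from the divisor identity $E_{S}(f,n)=\sum_{d\,:\,d|n} d\,C_{S}(f,d)$, applies Theorem~\ref{thm:dirc} with exponent $z+1$ to obtain equation~\eqref{eqn:dirczeta}, divides by $\zeta(z+1)$, and then substitutes the hypothesis $\kappa_{1}\lambda_{f}^{n}\le E_{S}(f,n)\le\kappa_{2}\lambda_{f}^{n}$ termwise to reach the two-sided bound in~\eqref{ineq:dircub} and~\eqref{ineq:dirclb}. Your dynamical derivation of the divisor identity via ${\rm Per}_{n}(S)=\bigsqcup_{d|n}{\rm Per}'_{d}(S)$ and $D_{S}(f,d)=d\,C_{S}(f,d)$ is a nice addition not spelled out in the paper.

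You also flag a genuine issue that the paper does not address: since $\lambda_{f}>1$, every Dirichlet series appearing in the argument diverges for all $z$, so the exact identity~\eqref{eqn:dirczeta} and the inequalities that follow must be read either as formal identities between coefficients or as uniform comparabilities of partial sums. The paper simply writes the infinite sums without comment; your observation that this needs interpretation is correct, and neither the paper nor your sketch actually carries out the partial-sum verification. (Relatedly, note that the paper's proof produces $\zeta(z+1)^{-1}$ while the statement of Theorem~\ref{thm:four} has $\zeta(z)^{-1}$; your careful use of ``$\zeta(\cdot)$'' sidesteps this discrepancy.)
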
 

\begin{proof}[of Theorem \ref{thm:four}] 
Since $E_{S} (f, n) = \sum\limits_{d\, :\, d|n} d C_{S} (f, d)$, Theorem \ref{thm:dirc} implies  
\begin{equation} 
\label{eqn:dirczeta} 
\sum_{n\, \ge\, 1} \frac{E_{S} (f, n)}{n^{z + 1}}\ \ =\ \ \zeta (z + 1) \sum_{n\, \ge\, 1} \frac{n C_{S} (f, n)}{n^{z + 1}}, 
\end{equation} 
for $z$ with ${\rm Re} (z) > 1$. Consider the Dirichlet series of the sequence $C_{S} (f, n)$ given by 
\[ \mathscr{D}_{S}^{f} (z)\ \ =\ \ \sum_{n\, \ge\, 1} \frac{C_{S} (f, n)}{n^{z}}. \] 
Hence, making use of Equation \eqref{eqn:dirczeta}, we have 
\begin{equation} 
\label{ineq:dircub} 
\mathscr{D}_{S}^{f} (z)\ =\ \frac{1}{\zeta (z + 1)} \sum_{n\, \ge\, 1} \frac{E_{S} (f, n)}{n^{z + 1}}\ \le\ \frac{\kappa_{2}}{\zeta (z + 1)} \left( \lambda_{f} + \frac{\lambda_{f}^{2}}{2^{z + 1}} + \frac{\lambda_{f}^{3}}{3^{z + 1}} + \cdots \right). 
\end{equation} 
Also, 
\begin{equation} 
\label{ineq:dirclb} 
\mathscr{D}_{S}^{f} (z)\ \ge\ \frac{\kappa_{1}}{\zeta (z + 1)} \left( \lambda_{f} + \frac{\lambda_{f}^{2}}{2^{z + 1}} + \frac{\lambda_{f}^{3}}{3^{z + 1}} + \cdots \right). 
\end{equation} 
Hence, combining the inequalities \eqref{ineq:dircub} and \eqref{ineq:dirclb}, there exists positive constants $\sigma_{7}$ and $\sigma_{8}$ such that 
\[ \sigma_{7} \sum_{n\, \ge\, 1} \frac{\lambda_{f}^{n}}{n^{z + 1}}\ \ \le\ \ \mathscr{D}_{S}^{f} (z)\ \ =\ \ \sum_{n\, \ge\, 1} \frac{C_{S} (f, n)}{n^{z}}\ \ \le\ \ \sigma_{8} \sum_{n\, \ge\, 1} \frac{\lambda_{f}^{n}}{n^{z + 1}}. \] 
\end{proof} 

\section{Concluding remarks} 
\label{sec:cor}

In this section, we write a few corollaries of the main results that we have proved in this manuscript. Note that when $f \equiv 0,\ C_{S} (0, n)$ counts the number of closed orbits of $S$ of length $n$ while $E_{S} (0, n)$ counts the number of periodic points of period $n$. A simple calculation reveals $E_{S} (0, n) = \left( r_{1} + r_{2} + \cdots + r_{M} \right)^{n} + M^{n}$. 

\begin{corollary} For the function $f \equiv 0$, we have 
\begin{enumerate} 
\item $\pi_{S} (0, N)\ \ =\ \ \sum\limits_{n\, \le\, N}  C_{S} (0, n)\ \ \asymp\ \ \dfrac{\left( r_{1} + r_{2} + \cdots + r_{M} \right)^{N}}{N}$. 
\item $\sum\limits_{n\, \le\, N}  \dfrac{C_{S} (0, n)}{\left( r_{1} + r_{2} + \cdots + r_{M} \right)^{n}}\ \ \asymp\ \ \log N$. 
\item $\sum\limits_{n\, \ge\, 1}  \dfrac{C_{S} (0, n)}{n^{k} \left( r_{1} + r_{2} + \cdots + r_{M} \right)^{n}}\ \ \asymp\ \ \dfrac{1}{k}\ \ \ \ \text{for some fixed}\ \ \ k > 0$. 
\item $\sum\limits_{n\, \ge\, 1} \dfrac{C_{S} (0, n)}{n^{z}}\ \ \asymp\ \ \dfrac{1}{\zeta (z)} \sum\limits_{n\, \ge\, 1} \dfrac{\left( r_{1} + r_{2} + \cdots + r_{M} \right)^{n}}{n^{z + 1}}$. 
\end{enumerate} 
\end{corollary}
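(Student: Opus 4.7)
The plan is to derive all four items as immediate specialisations of Theorems \ref{thm:one}, \ref{thm:two}, \ref{thm:three} and \ref{thm:four}, once the common hypothesis $E_{S}(f,n) \asymp \lambda_{f}^{n}$ has been verified for $f\equiv 0$ with $\lambda_{f}:=R:=r_{1}+r_{2}+\cdots+r_{M}$. So the single piece of real work is the unpacking of $E_{S}(0,n)$ and the comparability check; everything else is substitution.

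First, I would unpack $E_{S}(0,n)$. With $f\equiv 0$ the ergodic sum $f^{n_p}$ vanishes identically, hence $e^{f^{n_p}(\omega,z_{0})}=1$ and $E_{S}(0,n)=\bigl|{\rm Per}_{n}(S)\bigr|$. A point $(\omega,z_{0})\in{\rm Per}_{n}(S)$ is specified by (i) an $n$-letter block that is repeated infinitely to form $\omega$, for which there are $M^{n}$ choices, and (ii) a fixed point $z_{0}\in\widehat{\mathbb{C}}$ of the composition $R_{\omega_{n}}\circ\cdots\circ R_{\omega_{1}}$, which is a rational map of degree $r_{\omega_{1}}r_{\omega_{2}}\cdots r_{\omega_{n}}$ and hence has exactly $r_{\omega_{1}}\cdots r_{\omega_{n}}+1$ fixed points in $\widehat{\mathbb{C}}$ counted with multiplicity. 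Summing over all $n$-letter words and using the multinomial identity
\[ \sum_{\omega\,\in\,\{1,\ldots,M\}^{n}} r_{\omega_{1}}r_{\omega_{2}}\cdots r_{\omega_{n}}\ \ =\ \ (r_{1}+r_{2}+\cdots+r_{M})^{n}\ \ =\ \ R^{n} \]
yields $E_{S}(0,n)=R^{n}+M^{n}$, which reproduces the formula quoted in the remark.

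Next, I would check that $\lambda_{f}:=R$ satisfies the hypothesis of Theorem \ref{thm:one}. Since every rational map in the semigroup is non-constant (so $r_{j}\ge 1$) and at least one has degree $\ge 2$, we get $R\ge M+1>M\ge 1$, so $R>1$. Using $M^{n}\le R^{n}$, we obtain the two-sided bound
\[ R^{n}\ \ \le\ \ E_{S}(0,n)\ =\ R^{n}+M^{n}\ \ \le\ \ 2R^{n}, \]
i.e.\ $E_{S}(0,n)\asymp R^{n}$ with constants $\kappa_{1}=1$ and $\kappa_{2}=2$. The hypothesis of Theorem \ref{thm:one} is therefore met.

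Finally, items (1)--(4) fall out by substituting $f\equiv 0$ and $\lambda_{f}=R=r_{1}+r_{2}+\cdots+r_{M}$ into the conclusions of Theorems \ref{thm:one}, \ref{thm:two}, \ref{thm:three} and \ref{thm:four}, respectively. I do not anticipate any real obstacle: the only non-book-keeping input is the classical fact that a rational map of degree $d$ on $\widehat{\mathbb{C}}$ has $d+1$ fixed points counted with multiplicity, which underlies the computation of $|{\rm Per}_{n}(S)|$ above.
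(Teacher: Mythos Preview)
Your proposal is correct and follows exactly the route the paper takes: the paper states just before the corollary that ``a simple calculation reveals $E_{S}(0,n)=(r_{1}+\cdots+r_{M})^{n}+M^{n}$'' and then leaves the four items as direct specialisations of Theorems \ref{thm:one}--\ref{thm:four}. You have simply spelled out that ``simple calculation'' (via the $d+1$ fixed-point count for a degree-$d$ rational map and the multinomial identity) and made the comparability $E_{S}(0,n)\asymp R^{n}$ explicit with constants $\kappa_{1}=1$, $\kappa_{2}=2$, which the paper leaves implicit.
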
 

Suppose we only have one map, say $R_{1}$ in our collection of maps, whose degree is given by $r_{1} \ge 2$. Then, the appropriate shift space consists of a single point and thus, can be omitted in our expression. Thus, in this setting, we denote the respective quantities associated to the skew-product map $S$, by merely employing the notation to express the corresponding quantity to the rational map $R_{1}$. In such a dynamical system, we consider the restriction of the map $R_{1}$ in its Julia set, denoted by $J_{R_{1}}$. Let $f$ be some constant function, say $c$ defined on $J_{R_{1}}$. Then, we have the following corollary. 

\begin{corollary} 
\label{cor:singlemap} 
Let $f \equiv c$ be a constant map defined on the Julia set $J_{R_{1}}$ of some rational map $R_{1}$ with degree $r_{1} \ge 2$. Suppose $h(R_{1})$ denotes the entropy of the dynamical system $R_{1}$ restricted on its Julia set. Then, for the quantities $E_{R_{1}} (f, N)$ and $C_{R_{1}} (f, N)$, (analogous to ones defined in Section \eqref{sec:prelim}), that now keeps count of the periodic points in $J_{R_{1}}$, we have 
\begin{enumerate} 
\item $\pi_{R_{1}} (f, N)\ \ =\ \ \sum\limits_{n\, \le\, N}  C_{R_{1}} (f, n)\ \ \asymp\ \ \dfrac{e^{N \left( h(R_{1}) + c \right)}}{N}$. 
\item $\sum\limits_{n\, \le\, N}  \dfrac{C_{R_{1}} (f, n)}{e^{n \left( h(R_{1}) + c \right)}}\ \ \asymp\ \ \log N$. 
\item $\sum\limits_{n\, \ge\, 1}  \dfrac{C_{R_{1}} (f, n)}{n^{k} e^{n \left( h(R_{1}) + c \right)}}\ \ \asymp\ \ \dfrac{1}{k}\ \ \ \ \text{for some fixed}\ \ \ k > 0$. 
\item $\sum\limits_{n\, \ge\, 1} \dfrac{C_{R_{1}} (f, n)}{n^{z}}\ \ \asymp\ \ \dfrac{1}{\zeta (z)} \sum\limits_{n\, \ge\, 1} \dfrac{e^{n \left( h(R_{1}) + c \right)}}{n^{z + 1}}$. 
\end{enumerate} 
\end{corollary}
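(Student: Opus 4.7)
The plan is to deduce each of the four conclusions of Corollary \ref{cor:singlemap} directly from the corresponding Theorems \ref{thm:one}--\ref{thm:four}, by verifying their common hypothesis $E_{R_{1}}(f, n) \asymp \lambda_{f}^{n}$ for the explicit choice $\lambda_{f} = e^{h(R_{1}) + c}$. Once this hypothesis is in place, substituting this value of $\lambda_{f}$ into the conclusions of Theorems \ref{thm:one}, \ref{thm:two}, \ref{thm:three} and \ref{thm:four} immediately yields items (1), (2), (3) and (4) respectively, with no additional work.

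The first step is to observe that when the semigroup consists of a single rational map $R_{1}$ of degree $r_{1} \geq 2$, the shift factor $\Sigma_{M}^{+}$ collapses to a single point, so $S$ reduces to the action of $R_{1}$ on $J_{R_{1}}$. By the classical computation of topological entropy for rational maps (Gromov, Lyubich), $h(R_{1}) = \log r_{1}$. Since $f \equiv c$ is constant, the ergodic sum degenerates to $f^{m} \equiv m c$, so
\[
E_{R_{1}}(f, n)\ =\ \sum_{p\, \in\, {\rm Per}_{n}(R_{1})} e^{n_{p} c}\ =\ \sum_{d\, :\, d | n} e^{d c}\, |{\rm Per}'_{d}(R_{1})|.
\]
Because $R_{1}^{n}$ is a rational map of degree $r_{1}^{n}$, it has $r_{1}^{n} + 1$ fixed points on $\widehat{\mathbb{C}}$ counted with multiplicity, and by the Fatou--Sullivan classification only finitely many periodic cycles lie in the Fatou set. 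Hence $|{\rm Per}_{n}(R_{1})| = r_{1}^{n} + O(1)$ also on $J_{R_{1}}$, and a M\"obius inversion then gives $|{\rm Per}'_{n}(R_{1})| = r_{1}^{n} + O(r_{1}^{n/2})$, in particular $|{\rm Per}'_{n}(R_{1})| \asymp r_{1}^{n}$.

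Combining these facts, the $d = n$ term dominates in the above decomposition and one obtains $E_{R_{1}}(f, n) \asymp e^{nc} r_{1}^{n} = e^{n (h(R_{1}) + c)} = \lambda_{f}^{n}$, under the tacit assumption $c > -h(R_{1})$ needed to ensure $\lambda_{f} > 1$ as demanded by Theorem \ref{thm:one}. The main obstacle in carrying this out cleanly is to control the subdominant contributions $\sum_{d | n,\, d < n} e^{dc} |{\rm Per}'_{d}(R_{1})|$ uniformly in $n$: when $c < 0$ the weights $e^{dc}$ favour small divisors $d$, but in this range $d \leq n/2$ and $|{\rm Per}'_{d}(R_{1})| \leq r_{1}^{d} + O(1)$, so the subdominant sum is $O(r_{1}^{n/2})$ when $c \leq 0$ and $O(r_{1}^{n/2} e^{n c / 2})$ when $c \geq 0$; in either case it is negligible relative to $\lambda_{f}^{n}$. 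With the hypothesis of Theorem \ref{thm:one} thereby verified, the four assertions of Corollary \ref{cor:singlemap} follow at once.
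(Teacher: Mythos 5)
Your overall strategy is exactly the one the paper intends: the paper gives no written proof of Corollary \ref{cor:singlemap}, but the auxiliary facts it quotes immediately after the statement (the Julia set as the closure of the repelling periodic points, the Chang--Fang bounds on the number of repelling periodic points, and $h(R_{1}) = \log r_{1}$) are precisely the ingredients for verifying the standing hypothesis $E_{R_{1}}(f,n) \asymp \lambda_{f}^{n}$ with $\lambda_{f} = e^{h(R_{1})+c} = e^{c} r_{1}$, after which Theorems \ref{thm:one}--\ref{thm:four} yield the four items verbatim. Your identification of the tacit assumption $c > -h(R_{1})$ (needed so that $\lambda_{f} > 1$) is correct and worth making explicit, since the corollary is silent about it.

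Two steps need repair, though neither is fatal. First, your negligibility claim for the subdominant sum when $c \le 0$ is too crude: a bound of $O(r_{1}^{n/2})$ is \emph{not} negligible relative to $\lambda_{f}^{n} = (e^{c}r_{1})^{n}$ unless $e^{c} > r_{1}^{-1/2}$, so your argument fails in the admissible regime $-h(R_{1}) < c \le -\tfrac{1}{2}h(R_{1})$. The fix is to keep the weight attached to the count: $e^{dc}\,\#\,{\rm Per}'_{d} \le 2\,(e^{c}r_{1})^{d} = 2\lambda_{f}^{d}$, so the sum over proper divisors $d \le n/2$ is $O(\lambda_{f}^{n/2})$, which is negligible whenever $\lambda_{f} > 1$, with no case split on the sign of $c$. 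Second, your lower bound $\#\,{\rm Per}'_{n}(R_{1}) \asymp r_{1}^{n}$ \emph{inside the Julia set} is asserted via ``$r_{1}^{n}+O(1)$ fixed points of $R_{1}^{n}$'' plus the Fatou--Sullivan classification; this glosses over the loss coming from multiple (parabolic) fixed points of $R_{1}^{n}$, which is exactly what the Chang--Fang inequality quoted in the paper, namely $\#\,{\rm Rep.Per}_{n}(R_{1}) \ge r_{1}^{n} - \sum_{d<n,\ d|n} r_{1}^{d} - 4n(r_{1}-1)$, is there to control. Since repelling periodic points lie in $J_{R_{1}}$, that inequality gives the required lower bound directly for all sufficiently large $n$, and the finitely many remaining $n$ are absorbed into the comparability constants once one checks $E_{R_{1}}(f,n) > 0$ there. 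With these two adjustments your argument is complete and coincides with the proof the paper has in mind.
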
 

We now quote some results that may be helpful in the proof of Corollary \ref{cor:singlemap}. Recall that the Julia set of a rational map $R_{1}$ of degree $r_{1} \ge 2$ is defined as the closure of the set of all repelling periodic points of $R_{1}$, see \cite{ml:1986}. Further, we know from \cite{cf:2006} that the number of repelling periodic points of $R_{1}$ of period $n$ denoted by ${\rm Rep.Per}_{n} (R_{1})$ is bounded by 
\[ r_{1}^{n} - \sum_{d < n\, :\, d|n} r_{1}^{d} - 4n (r_{1} - 1)\ \ \le\ \ \# {\rm Rep.Per}_{n} (R_{1})\ \ \le\ \ 2 r_{1}^{n}. \] 
Also, $h (R_{1}) = \log r_{1}$. 

Finally, we conclude the paper with an explanation for the constant $\lambda_{f}$ that we have written as the standard hypothesis in all the main theorems. We now recall the same. Suppose there exist constants $\lambda_{f} > 1$ and $\kappa_{1},\ \kappa_{2} > 0$ such that $\kappa_{1} \left( \lambda_{f} \right)^{n} \le E_{S} (f, n) \le \kappa_{2} \left( \lambda_{f} \right)^{n}$, where $S$ is the skew-product map defined on $X$ and $f : X \longrightarrow \mathbb{R}$. 

Define a power series 
\[ \rho_{f} (z)\ \ =\ \ \sum_{n\, \ge\, 1} E_{S} (f, n) \frac{z^{n}}{n}. \]
Then, the radius of convergence of the power series $\rho_{f}$ is given by $\dfrac{1}{\lambda_{f}}$. The motivation for such a power series comes from the dynamical zeta function that Parry and Pollicott work with in \cite{pp:1990}. Further, one may observe that, for $|z| < \dfrac{1}{\lambda_{f}}$, we have $\rho_{f} (z) \asymp \log \left( \dfrac{1}{1 - z \lambda_{f}} \right)$. 
\bigskip 
\bigskip 

%\noindent 
%{\bf Declarations}: 
%\begin{itemize} 
%\item The authors declare that no funds, grants, or other support were received during the preparation of this manuscript. 
%\item The authors have no relevant financial or non-financial interests to disclose.
%\item The authors have no competing interests to declare that are relevant to the content of this article.
%\item All authors contributed, read and approved the final manuscript. 
%\end{itemize} 
%\bigskip 

%\noindent 
%{\bf Data Availability statement}: Apart from the references mentioned below, there is no associated data that was used for the preparation of this manuscript. 
%\bigskip 
%\bigskip 

\bigskip 
\bigskip 

\emph{Authors' contact coordinates:} \\ 

{\bf Subith Gopinathan} \\ 
Indian Institute of Science Education and Research Thiruvananthapuram (IISER-TVM). \\ 
email: \texttt{subith21@iisertvm.ac.in} \\ 
\medskip 

{\bf Bharath Krishna Seshadri} \\ 
Indian Institute of Science Education and Research Thiruvananthapuram (IISER-TVM). \\ 
email: \texttt{bharathmaths21@iisertvm.ac.in} \\ 
\medskip 

{\bf Shrihari Sridharan} \\ 
Indian Institute of Science Education and Research Thiruvananthapuram (IISER-TVM). \\ 
email: \texttt{shrihari@iisertvm.ac.in}

\end{document}